\let\svthefootnote\thefootnote
\newcommand\freefootnote[1]{%
  \let\thefootnote\relax%
  \footnotetext{#1}%
  \let\thefootnote\svthefootnote%
}
\newcommand\restr[2]{{%
\left.
\kern-
\nulldelimiterspace %
#1 %
\right|_{#2} %
}}
\tikzset{>=stealth}
\newtheorem{theorem}{Theorem}
\newtheorem{definition}[theorem]{Definition}
\newtheorem{proposition}[theorem]{Proposition}
\newtheorem{corollary}[theorem]{Corollary}
\newtheorem{question}[theorem]{Question}
\theoremstyle{definition}
\newtheorem{remark}[theorem]{Remark}
\theoremstyle{definition}
\begin{document}
\title{Pure maps are strict monomorphisms}
\author{Kristóf Kanalas}
\date{}
\maketitle

\begin{abstract}
    We prove that $i)$ if $\mathcal{A}$ is $\lambda $-accessible and it is axiomatizable in (finitary) coherent logic then $\lambda $-pure maps are strict monomorphisms and $ii)$ if there is a proper class of strongly compact cardinals and $\mathcal{A}$ is $\lambda $-accessible then for some $\mu \vartriangleright \lambda $ every $\mu $-pure map is a strict monomorphism. \freefootnote{Supported by the Grant Agency of the Czech Republic (grant
22-02964S), and by Masaryk University (project MUNI/A/1457/2023).}
\end{abstract}

\subsubsection*{Introduction} In the book \cite{rosicky} the authors ask if it is true that in a $\lambda $-accessible category $\lambda $-pure maps are regular monomorphisms. In \cite{ADAMEK19961} a counterexample is given: there is a small $\omega $-accessible category with an $\omega $-pure map which is an epi (but not iso), hence it is not even a strong monomorphism.

Some positive results are known: in accessible categories with pushouts (\cite{ADAMEK19961}) or in accessible categories with products (\cite{hupreprint}, \cite{HU1997163}) the answer is affirmative. Here we will give some further positive results.

Our argument goes through the syntactic side of categorical logic: $(\lambda ,\kappa )$-coherent categories, $\kappa $-sites, $\kappa $-toposes, etc. For these notions we refer to \cite[Section 2]{bvalued} or alternatively to \cite{presheaftype}. To simplify notation $(\kappa ,\kappa )$-coherent categories (and functors) will be called $\kappa $-coherent.

The main idea is the following: given a $\lambda $-accessible category $\mathcal{A}$ we may axiomatize it by a so-called $\lambda $-site: there is a small $\lambda $-lex category $\mathcal{C}$ with some specified arrow-families (wide cospans), whose set we call $E$, such that $\mathcal{A}$ is equivalent to the full subcategory of $\mathbf{Lex}_{\lambda }(\mathcal{C},\mathbf{Set})$ spanned by those $\lambda $-lex functors which send the $E$-families to jointly surjective ones.

If $\mathcal{C}$ is regular then every $\lambda $-pure map $\alpha :M\to N$ in $\mathcal{A}$ is a regular monomorphism in $\mathbf{Lex}_{\lambda }(\mathcal{C},\mathbf{Set})$. So it is the equalizer of some pair $\beta _1, \beta _2:N\to F$ where $F$ is a  $\lambda $-lex functor. Therefore it would be enough to find a monomorphism from $F$ to some model. This can almost be done: if $(\mathcal{C},E)$ satisfies some further exactness properties then every $\lambda $-lex functor admits a regular monomorphism to a product of models, and hence $\alpha $ is the joint equalizer of a set of parallel pairs. Such maps are called strict monomorphisms. 

In what follows we will clarify these claims.

\begin{definition}
    $\mathcal{C}$, $\mathcal{D}$,  $M,N:\mathcal{C}\to \mathcal{D}$ are lex. $\alpha :M\Rightarrow N $ is elementary if the naturality squares at monos are pullbacks.
\end{definition} 

\begin{remark}
    Think of $\mathcal{C}$ as a syntactic category (in some fragment of first-order logic), and $M,N:\mathcal{C}\to \mathbf{Set}$ as models. Then a map $M\Rightarrow N$ is elementary iff the squares
\[
\adjustbox{scale=0.9}{
\begin{tikzcd}
	{[\varphi (\vec{x})]^M} && {[\vec{x}=\vec{x}]^M} \\
	\\
	{[\varphi (\vec{x})]^N} && {[\vec{x}=\vec{x}]^N}
	\arrow[hook, from=1-1, to=1-3]
	\arrow[from=1-1, to=3-1]
	\arrow[from=1-3, to=3-3]
	\arrow[hook, from=3-1, to=3-3]
\end{tikzcd}
}
\]
    are pullbacks. So maps between models are functions preserving the formulas in the given fragment (commutativity), elementary maps are functions preserving and reflecting them (pullback). When $\mathcal{C}$ is a coherent category (i.e.~formulas in $\mathcal{C}$ are positive existential), maps are just homomorphisms, elementary maps are those homomorphisms which reflect positive existential formulas. These are called immersions in positive logic. 
\end{remark}

\begin{remark}
    Elementary maps were first defined in \cite{barr}. They are also called elementary in \cite{lurie}. They are called immersions in \cite{exclosedkamsma}. They are called mono-cartesian in \cite{GARNER2020102831}.
\end{remark}

\begin{proposition}
    elementary $\Rightarrow $ pointwise mono.
\end{proposition}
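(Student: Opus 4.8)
The plan is to test the elementary condition against one carefully chosen monomorphism in $\mathcal{C}$, namely the diagonal $\delta_c = \langle \mathrm{id}_c,\mathrm{id}_c\rangle : c \to c\times c$ associated to an arbitrary object $c$. This is a split mono (a section of either projection $c\times c \to c$), so it is in particular a mono, and the hypothesis guarantees that the naturality square of $\alpha$ at $\delta_c$ is a pullback. The point is that this very square is a disguised form of the statement ``$\alpha_c$ is monic.''

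To see this I will first isolate the relevant lemma from plain finite-limit category theory: for $f:A\to B$ in a lex category, $f$ is a monomorphism iff the square

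\[
\begin{tikzcd}
A \arrow[r, "\Delta_A"] \arrow[d, "f"'] & A\times A \arrow[d, "f\times f"] \\
B \arrow[r, "\Delta_B"'] & B\times B
\end{tikzcd}
\]

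is a pullback. Indeed the pullback of $\Delta_B$ along $f\times f$ is canonically the kernel pair $A\times_B A$, and the comparison map from $A$ is exactly the diagonal $A\to A\times_B A$; the square is a pullback precisely when that diagonal is invertible, which is the usual criterion for $f$ to be a mono.

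Next I transport this picture along $M$ and $N$. Because both functors are lex they preserve binary products, so $M(c\times c)\cong M(c)\times M(c)$ and $N(c\times c)\cong N(c)\times N(c)$, and under these identifications $M(\delta_c)$ and $N(\delta_c)$ become the diagonals $\Delta_{M(c)}$ and $\Delta_{N(c)}$. Testing naturality of $\alpha$ against the two projections $c\times c\to c$ shows that $\alpha_{c\times c}$ is identified with $\alpha_c\times\alpha_c$. Consequently the naturality square at $\delta_c$ is isomorphic to the diagonal square of the lemma with $f=\alpha_c$. Since the elementary hypothesis makes this square a pullback, the lemma forces $\alpha_c$ to be a monomorphism; as $c$ was arbitrary, $\alpha$ is pointwise mono.

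The only step carrying real content is the reformulation of monicity as the diagonal pullback square, after which the argument is bookkeeping about lex functors preserving the products, diagonals and projections involved. I do not expect a genuine obstacle here; the one place demanding care is verifying that $\alpha_{c\times c}$ coincides with $\alpha_c\times\alpha_c$ under the product identifications, which follows from the universal property of the product together with naturality at the projections.
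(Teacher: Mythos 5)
Your proof is correct and complete. The two load-bearing steps both check out: (i) the kernel-pair characterization --- the pullback of $\Delta_B$ along $f\times f$ is $A\times_B A$, and the comparison map $A\to A\times_B A$ is invertible precisely when $f$ is mono --- is standard and valid in any lex category; (ii) the identification of $\alpha_{c\times c}$ with $\alpha_c\times\alpha_c$ follows, exactly as you say, by composing with the projections and using naturality of $\alpha$ at $\pi_1,\pi_2$. Note also that $\delta_c$ is split mono, so the elementary hypothesis genuinely applies to it, and your argument works for an arbitrary lex target $\mathcal{D}$, which matches the generality of the paper's definition (not just $\mathcal{D}=\mathbf{Set}$).

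There is no internal proof in the paper to compare against: the author simply cites \cite[Proposition 5.8]{bvalued}. Your diagonal-square argument is the standard self-contained proof of this fact, so you have supplied what the paper outsources.
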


\begin{proof}
    \cite[Proposition 5.8]{bvalued}
\end{proof}

At regular monomorphisms the converse is also true:

\begin{proposition}
\label{atregmonopb}
     $\mathcal{C}$, $\mathcal{D}$,  $M,N:\mathcal{C}\to \mathcal{D}$ are lex, $\alpha :M\Rightarrow N $ is pointwise mono. Then at any regular monomorphism $i:u\hookrightarrow x$ of $\mathcal{C}$ the naturality square is a pullback. 
\end{proposition}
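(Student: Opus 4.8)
The plan is to trade the abstract notion of regular monomorphism for the concrete one of equalizer, and then let the lex and pointwise-mono hypotheses do the work. So write $i : u \hookrightarrow x$ as the equalizer of some parallel pair $f,g : x \to y$ in $\mathcal{C}$. Since $M$ and $N$ are lex they preserve this equalizer, so $Mi : Mu \to Mx$ is the equalizer of $Mf, Mg$ and $Ni : Nu \to Nx$ is the equalizer of $Nf, Ng$; in particular both $Mi$ and $Ni$ are monos.

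Next I would form, in the finitely complete category $\mathcal{D}$, the pullback $P$ of $\alpha_x : Mx \to Nx$ along $Ni : Nu \to Nx$, with projections $p_1 : P \to Mx$ and $p_2 : P \to Nu$ (so $\alpha_x \circ p_1 = Ni \circ p_2$). Naturality of $\alpha$ at $i$ gives $\alpha_x \circ Mi = Ni \circ \alpha_u$, hence a canonical comparison $c : Mu \to P$ with $p_1 \circ c = Mi$ and $p_2 \circ c = \alpha_u$. The whole statement reduces to showing that $c$ is an isomorphism.

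To build an inverse, I would first check that $p_1$ equalizes $Mf$ and $Mg$. Using the pullback relation, naturality at $f$ and at $g$, and the equalizer identity $Nf \circ Ni = Ng \circ Ni$, a short chain gives $\alpha_y \circ Mf \circ p_1 = Nf \circ Ni \circ p_2 = Ng \circ Ni \circ p_2 = \alpha_y \circ Mg \circ p_1$. This is where the hypothesis enters: $\alpha_y$ is monic, so it cancels and $Mf \circ p_1 = Mg \circ p_1$. The universal property of the equalizer $Mi$ then yields a unique $h : P \to Mu$ with $Mi \circ h = p_1$. Now $h \circ c = \mathrm{id}_{Mu}$ because $Mi \circ (h \circ c) = p_1 \circ c = Mi$ and $Mi$ is mono; and $c \circ h = \mathrm{id}_P$ follows from the uniqueness clause of the pullback once I verify $p_1 \circ (c \circ h) = p_1$ (immediate) and $p_2 \circ (c \circ h) = p_2$, the latter being $\alpha_u \circ h = p_2$, which comes from $Ni \circ \alpha_u \circ h = \alpha_x \circ Mi \circ h = \alpha_x \circ p_1 = Ni \circ p_2$ after cancelling the mono $Ni$.

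The proof is not deep; the only thing to get right is the bookkeeping of the two cancellations, and the point to highlight is that \emph{both} of them are legitimate precisely because of the standing assumptions: $\alpha_y$ is monic by pointwise-monicity of $\alpha$, and $Ni$ is monic because it is an equalizer. No exactness of $\mathcal{C}$ or $\mathcal{D}$ beyond lex is used. As an alternative one may avoid the diagram chase altogether by applying the representable functors $\mathcal{D}(d,-)$: these preserve monos, equalizers and pullbacks and jointly reflect pullbacks, so it suffices to run the evident element-wise version of the above argument in $\mathbf{Set}$.
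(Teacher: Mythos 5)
Your proof is correct and takes essentially the same route as the paper: both write $i$ as the equalizer of a pair $f,g:x\to y$, use that the lex functors preserve this equalizer, and rest on the same two cancellations, of the mono $\alpha_y$ (to see that the relevant map equalizes $Mf,Mg$) and of the mono $Ni$ (to verify the second projection). The only difference is cosmetic: you form the pullback explicitly and invert the comparison map, while the paper verifies the universal property directly against a test cone $Z$.
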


\begin{proof}
   Assume that $i$ is the equalizer of $f,g:x\to y$. Given a cospan $Gu\xleftarrow{k}Z\xrightarrow{h}Fx$ with $\alpha _x\circ h = Gi \circ k$ as in
\[\begin{tikzcd}
	Z \\
	& Fu && Fx && Fy \\
	\\
	& Gu && Gx && Gy
	\arrow["t"{description}, dashed, from=1-1, to=2-2]
	\arrow["h"{description}, curve={height=-12pt}, from=1-1, to=2-4]
	\arrow["k"{description}, curve={height=12pt}, from=1-1, to=4-2]
	\arrow["Fi"{description}, hook, from=2-2, to=2-4]
	\arrow["{\alpha _u}", hook', from=2-2, to=4-2]
	\arrow["Ff", shift left=2, from=2-4, to=2-6]
	\arrow["Fg"', shift right=2, from=2-4, to=2-6]
	\arrow["{\alpha _x}", hook', from=2-4, to=4-4]
	\arrow["{\alpha _y}", hook', from=2-6, to=4-6]
	\arrow["Gi"{description}, hook, from=4-2, to=4-4]
	\arrow["Gf", shift left=2, from=4-4, to=4-6]
	\arrow["Gg"', shift right=2, from=4-4, to=4-6]
\end{tikzcd}\]
   we see that $h$ factors through $Fi$ (via the dashed arrow $t:Z\to Fu$), since $Fi$ is the equalizer of $(\alpha _y\circ Ff,\alpha _y\circ Fg)=(Gf\circ \alpha _x, Gg\circ \alpha _x)$. We also get $\alpha _u \circ t = k$ as $Gi$ is mono.
\end{proof}

\begin{remark}
    In \cite[Corollary 5.13]{bvalued} we proved that if the unit of a geometric morphism is pointwise mono then it is elementary. As in a Grothendieck topos all monos are regular, the above proposition yields a second proof of this.
\end{remark}

\begin{remark}
\label{keyobserv}
    $\mathcal{C}$, $M,N:\mathcal{C}\to \mathbf{Set}$ are lex. By definition, $\alpha :M\Rightarrow N$ is elementary iff for every mono $u\hookrightarrow x$ in $\mathcal{C}$
\[
\adjustbox{scale=0.9}{
\begin{tikzcd}
	Mu && Mx \\
	\\
	Nu && Nx
	\arrow[hook, from=1-1, to=1-3]
	\arrow[from=1-1, to=3-1]
	\arrow["{\alpha _x}", from=1-3, to=3-3]
	\arrow[hook, from=3-1, to=3-3]
\end{tikzcd}
}
\]
is a pullback, i.e.~iff for every $a\in Mx$ if $\alpha _x(a)\in Nu$ then $a\in Mu$. In other terms every commutative square
\[
\adjustbox{scale=0.9}{
\begin{tikzcd}
	{\widehat{x}:= \ \mathcal{C}(x,-)} && M \\
	\\
	{\widehat{u}:= \ \mathcal{C}(u,-)} && N
	\arrow[""{name=0, anchor=center, inner sep=0}, from=1-1, to=1-3]
	\arrow["{\widehat{i}:= \ -\circ i}"', from=1-1, to=3-1]
	\arrow["\alpha", from=1-3, to=3-3]
	\arrow[""{name=1, anchor=center, inner sep=0}, dashed, from=3-1, to=1-3]
	\arrow[from=3-1, to=3-3]
	\arrow["{=}"', draw=none, from=0, to=1]
\end{tikzcd}
}
\]
admits a diagonal map which makes the upper triangle commutative.
\end{remark}

This looks very similar to the definition of pure maps (cf.~\cite[Definition 2.27]{rosicky}). 

\begin{definition}
    $\mathcal{K}$ is $\lambda $-accessible ($\lambda $ infinite, regular). A map $f:x\to y$ is $\lambda $-pure if for any commutative square
\[\begin{tikzcd}
	a & x \\
	b & y
	\arrow[from=1-1, to=1-2]
	\arrow[from=1-1, to=2-1]
	\arrow["f", from=1-2, to=2-2]
	\arrow[from=2-1, to=2-2]
\end{tikzcd}\]
    with $a,b$ $\lambda $-presentable, there is a diagonal map $h:b\to x$ making the upper triangle commute.
\end{definition}

\begin{proposition}
\label{purethenelem}
    $\kappa \leq \lambda $ are infinite, regular cardinals. $(\mathcal{C},E)$ is a $\kappa $-site, $Mod_{\kappa }(\mathcal{C},E)$ is $\lambda $-accessible. Then $\lambda $-pure $\Rightarrow $ elementary.
\end{proposition}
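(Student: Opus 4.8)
The plan is to reduce elementariness to a diagonal-filling problem of exactly the shape appearing in the definition of $\lambda$-purity, and to bridge the two by sheafification. By Remark \ref{keyobserv}, the map $\alpha:M\Rightarrow N$ is elementary precisely when, for every mono $i:u\hookrightarrow x$ of $\mathcal{C}$, every commutative square with top edge $\widehat{x}\to M$, bottom edge $\widehat{u}\to N$, left edge $\widehat{i}:\widehat{x}\to\widehat{u}$ and right edge $\alpha$ admits a diagonal $\widehat{u}\to M$ making the upper triangle commute. The representables $\widehat{x}=\mathcal{C}(x,-)$ and $\widehat{u}=\mathcal{C}(u,-)$ are lex, so they lie in $\mathbf{Lex}_{\kappa}(\mathcal{C},\mathbf{Set})$, but they need not be models; the first step is therefore to replace them by their reflections into $Mod_{\kappa}(\mathcal{C},E)$.

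Let $L$ denote the reflector (sheafification) left adjoint to the inclusion $Mod_{\kappa}(\mathcal{C},E)\hookrightarrow\mathbf{Lex}_{\kappa}(\mathcal{C},\mathbf{Set})$, with unit $\eta$; its existence is furnished by the $\kappa$-site machinery of \cite{bvalued}, and I will only need that it is a reflection (left-exactness is not required). Since $M$ and $N$ are models, the maps $\widehat{x}\to M$ and $\widehat{u}\to N$ factor uniquely through $\eta_{\widehat{x}}$ and $\eta_{\widehat{u}}$, and the naturality identity $\eta_{\widehat{u}}\circ\widehat{i}=L\widehat{i}\circ\eta_{\widehat{x}}$ together with the universal property of $L$ turns the square above into a commutative square with edges $L\widehat{x}\to M$, $L\widehat{u}\to N$, $L\widehat{i}:L\widehat{x}\to L\widehat{u}$ and $\alpha$, living entirely in $Mod_{\kappa}(\mathcal{C},E)$. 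If $L\widehat{x}$ and $L\widehat{u}$ are $\lambda$-presentable, then $\lambda$-purity of $\alpha$ yields a diagonal $d:L\widehat{u}\to M$ whose composite with $L\widehat{i}$ is the top map; precomposing $d$ with $\eta_{\widehat{u}}$ and using the same naturality identity produces a diagonal $\widehat{u}\to M$ filling the original representable square, so that $\alpha$ is elementary.

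The main obstacle is thus to show that $L\widehat{x}$ and $L\widehat{u}$ are $\lambda$-presentable, and I would argue this in two steps. First, each representable is $\kappa$-presentable in $\mathbf{Lex}_{\kappa}(\mathcal{C},\mathbf{Set})$: by the covariant Yoneda lemma $\mathbf{Lex}_{\kappa}(\mathcal{C},\mathbf{Set})(\widehat{x},-)\cong\mathrm{ev}_{x}$, and $\kappa$-filtered colimits in $\mathbf{Lex}_{\kappa}$ are computed pointwise, so evaluation preserves them. Second, the inclusion $Mod_{\kappa}(\mathcal{C},E)\hookrightarrow\mathbf{Lex}_{\kappa}(\mathcal{C},\mathbf{Set})$ preserves $\lambda$-filtered colimits: $\kappa$-lex functors are closed under $\kappa$-filtered colimits because $\kappa$-small limits commute with $\kappa$-filtered colimits in $\mathbf{Set}$, while the joint surjectivity of the $E$-families is preserved by \emph{any} filtered colimit, since an element of a filtered colimit is represented at some stage, where it is already hit by the relevant cover. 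As $\lambda\ge\kappa$, a $\lambda$-filtered colimit of models is again a model, computed as in $\mathbf{Lex}_{\kappa}$. A left adjoint whose right adjoint preserves $\lambda$-filtered colimits preserves $\lambda$-presentable objects; combining this with the $\kappa$-presentability (hence, since $\kappa\le\lambda$, $\lambda$-presentability) of the representables gives that $L\widehat{x}$ and $L\widehat{u}$ are $\lambda$-presentable, completing the argument. The point demanding the most care is precisely this preservation of $\lambda$-presentability under $L$, as it is the only place where the hypotheses $\kappa\le\lambda$ and the accessibility of $Mod_{\kappa}(\mathcal{C},E)$ are genuinely used.
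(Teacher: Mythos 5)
There is a genuine gap, and it is the step you yourself flagged as the crux. Your reduction to a diagonal-filling problem via Remark~\ref{keyobserv}, and the two facts you establish along the way (representables are tiny because $\kappa$-filtered colimits in $\mathbf{Lex}_{\kappa}(\mathcal{C},\mathbf{Set})$ are computed pointwise, and the inclusion $Mod_{\kappa}(\mathcal{C},E)\hookrightarrow\mathbf{Lex}_{\kappa}(\mathcal{C},\mathbf{Set})$ preserves $\lambda$-filtered colimits) are all correct, and they are exactly the ingredients of the paper's own proof. But the bridge you build with them fails: the reflector $L$ left adjoint to the inclusion of models into $\kappa$-lex functors does not exist in general, and no ``$\kappa$-site machinery'' supplies it. A full reflective subcategory of a complete category is complete, with limits computed in the ambient category; $\mathbf{Lex}_{\kappa}(\mathcal{C},\mathbf{Set})$ is locally $\kappa$-presentable, hence complete, whereas $Mod_{\kappa}(\mathcal{C},E)$ is typically not --- for the coherent theory of fields the category of models has no terminal object and no binary products (fields of different characteristics admit no common cone). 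The same example kills the pointwise version you actually need: a reflection $L\widehat{x}$ of a single representable would be an initial object of the comma category $\widehat{x}\downarrow Mod_{\kappa}(\mathcal{C},E)$, e.g.\ an initial field under a finitely presented ring such as $\mathbb{Z}[t]$, which does not exist. You have conflated sheafification --- a reflection of \emph{presheaves} on $\mathcal{C}$ into sheaves --- with a reflection of covariant lex functors into models; models correspond to points of the topos (lex cocontinuous functors \emph{out} of it), not to sheaves, and the category of points is not reflective in $\mathbf{Lex}_{\kappa}(\mathcal{C},\mathbf{Set})$. (The map $\#Y$ appearing later in the paper sheafifies representable presheaves on the site side; it plays no role here.)

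The repair is to drop $L$ and use your two correct observations directly, which is precisely what the paper does: present $M$ and $N$ as $\lambda$-filtered colimits of $\lambda$-presentable models (these colimits are computed in $\mathbf{Set}^{\mathcal{C}}$, by your second fact), then use tininess of $\widehat{x}$ and $\widehat{u}$ to factor the given square through a commutative square over a map $M_0\to N_0$ between $\lambda$-presentable models: factor $\widehat{x}\to M$ through some $\lambda$-presentable $M_0$, factor both $M_0\to M\xrightarrow{\alpha}N$ and $\widehat{u}\to N$ through a common colimit stage $N_0$, and pass to a later stage so that the two composites $\widehat{x}\rightrightarrows N_0$ agree, again by tininess. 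Now $\lambda$-purity of $\alpha$ applied to the square on $M_0\to N_0$ yields $h:N_0\to M$ with $h$ restricting to $M_0\to M$, and $\widehat{u}\to N_0\xrightarrow{h}M$ is the desired diagonal. No reflection is needed anywhere; the representables never have to be moved into $Mod_{\kappa}(\mathcal{C},E)$, only their maps into models have to be approximated at presentable stages.
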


\begin{proof}
    Given a square as in Remark~\ref{keyobserv}, we can factor it through some map $M_0\to N_0$ between $\lambda $-presentable models (using that $\kappa $-filtered (and hence $\lambda $-filtered) colimits of $\kappa $-lex $E$-preserving functors are computed in $\mathbf{Set}^{\mathcal{C}}$, where representables are tiny).
\[
\adjustbox{scale=0.9}{
\begin{tikzcd}
	{\widehat{x}} && {M_0} && M \\
	& {=} \\
	{\widehat{u}} && {N_0} && N
	\arrow[from=1-1, to=1-3]
	\arrow[curve={height=-12pt}, from=1-1, to=1-5]
	\arrow[from=1-1, to=3-1]
	\arrow[""{name=0, anchor=center, inner sep=0}, from=1-3, to=1-5]
	\arrow[from=1-3, to=3-3]
	\arrow[from=1-5, to=3-5]
	\arrow[from=3-1, to=3-3]
	\arrow[curve={height=12pt}, from=3-1, to=3-5]
	\arrow[""{name=1, anchor=center, inner sep=0}, from=3-3, to=1-5]
	\arrow[from=3-3, to=3-5]
	\arrow["{=}"', draw=none, from=0, to=1]
\end{tikzcd} 
}
\]
\end{proof}

\begin{remark}
    The notion of an elementary map depends on the presentation of the given accessible category as a category of functors. For instance, the category of Abelian groups $\mathbf{Ab}$ is equivalent to $\mathbf{Lex}(\mathbf{Ab}_{f.p.}^{op},\mathbf{Set})$, and this equivalence sends $i:\mathbb{Z}\xrightarrow{\cdot 2} \mathbb{Z}$ to $i_{\circ }:\mathbf{Ab}_{f.p.}(-,\mathbb{Z})\to \mathbf{Ab}_{f.p.}(-,\mathbb{Z})$. This map is not $\omega $-pure as $i$ is not $\omega $-pure and purity is invariant under equivalence. However, it is easy to check that it is elementary.
\end{remark}

This relates $\lambda $-pure vs.~elementary. Now let's relate elementary vs.~regular mono.

We will need the following:

\begin{theorem}
\label{positselski}
    Let $\mathcal{K}$ be locally $\kappa $-presentable. Then every $\kappa $-small diagram is the $\kappa $-directed colimit of such diagrams formed among $\kappa $-presentable objects. 
\end{theorem}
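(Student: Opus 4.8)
The plan is to realize the entire statement inside the functor category $\mathcal{K}^{\mathcal{I}}$, where $\mathcal{I}$ denotes the ($\kappa$-small) indexing category of the given diagram $D$, and to show that $\mathcal{K}^{\mathcal{I}}$ is again locally $\kappa$-presentable with its $\kappa$-presentable objects being, up to retract, precisely the diagrams valued in $\kappa$-presentable objects of $\mathcal{K}$. Write $\mathcal{K}_\kappa$ for the full subcategory of $\kappa$-presentable objects of $\mathcal{K}$.

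First I would recall that for each $i\in\mathcal{I}$ the evaluation functor $\mathrm{ev}_i:\mathcal{K}^{\mathcal{I}}\to\mathcal{K}$ has a left adjoint $L_i$, the pointwise left Kan extension, given on $A\in\mathcal{K}$ by the copower $L_iA:j\mapsto\mathcal{I}(i,j)\cdot A$ (a coproduct of $\mathcal{I}(i,j)$-many copies of $A$). Since all colimits in $\mathcal{K}^{\mathcal{I}}$, and in particular $\kappa$-directed ones, are computed pointwise, each $\mathrm{ev}_i$ preserves $\kappa$-directed colimits; hence $\mathcal{K}^{\mathcal{I}}(L_iA,-)\cong\mathcal{K}(A,\mathrm{ev}_i(-))$ preserves them whenever $A$ is $\kappa$-presentable, so $L_iA$ is $\kappa$-presentable in $\mathcal{K}^{\mathcal{I}}$. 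Letting $A$ range over a strong generator of $\mathcal{K}_\kappa$ and $i$ over $\mathcal{I}$, the objects $L_iA$ form a strong generator of $\mathcal{K}^{\mathcal{I}}$ consisting of $\kappa$-presentable objects, so $\mathcal{K}^{\mathcal{I}}$ is locally $\kappa$-presentable.

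The essential use of $\kappa$-smallness comes next. For $A$ $\kappa$-presentable, the value $L_iA(j)=\mathcal{I}(i,j)\cdot A$ is a coproduct of $<\kappa$ copies of a $\kappa$-presentable object (here $|\mathcal{I}(i,j)|<\kappa$ because $\mathcal{I}$ is $\kappa$-small), hence $\kappa$-presentable in $\mathcal{K}$; thus each generator $L_iA$ is valued in $\mathcal{K}_\kappa$. Now every $\kappa$-presentable object of a locally $\kappa$-presentable category is a retract of a $\kappa$-small colimit of objects drawn from a generating set of $\kappa$-presentables. Since such $\kappa$-small colimits and retracts are computed pointwise in $\mathcal{K}^{\mathcal{I}}$, and since $\mathcal{K}_\kappa$ is closed under $\kappa$-small colimits and retracts in $\mathcal{K}$, I conclude that every $\kappa$-presentable object of $\mathcal{K}^{\mathcal{I}}$ is a diagram valued in $\kappa$-presentable objects.

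Finally, $D$ is an object of the locally $\kappa$-presentable category $\mathcal{K}^{\mathcal{I}}$, so it is the $\kappa$-directed colimit of its canonical diagram of $\kappa$-presentable objects (the comma category of $\kappa$-presentables over $D$ is $\kappa$-filtered, and may be replaced by a $\kappa$-directed poset with the same colimit). By the previous paragraph each of these $\kappa$-presentable objects is a diagram over $\mathcal{I}$ valued in $\mathcal{K}_\kappa$, and the colimit is formed pointwise, which is exactly the assertion. I expect the main obstacle to be the identification of the $\kappa$-presentable objects of $\mathcal{K}^{\mathcal{I}}$ as pointwise $\kappa$-presentable: this is the one place where $\kappa$-smallness of $\mathcal{I}$ is genuinely needed, and it forces one to argue through the closure characterization of $\kappa$-presentable objects rather than to reason pointwise in a naive way.
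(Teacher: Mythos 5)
Your argument is correct, and it is genuinely different from what the paper does: the paper gives no internal proof at all, but simply cites Positselski's theorem (\cite[Theorem 6.1]{diaginacc}), whereas you reconstruct the statement from scratch inside the functor category $\mathcal{K}^{\mathcal{I}}$. Your chain of steps is sound: $\mathcal{K}^{\mathcal{I}}$ is locally $\kappa$-presentable with strong generator the objects $L_iA$ (this is essentially the proof of \cite[Corollary 1.54]{rosicky}); these generators are pointwise $\kappa$-presentable exactly because $\mathcal{I}$ has fewer than $\kappa$ morphisms and $\kappa$-presentables are closed under $\kappa$-small colimits; $\kappa$-presentable objects of $\mathcal{K}^{\mathcal{I}}$ are retracts of objects in the closure of the generators under $\kappa$-small colimits, and that closure, being computed pointwise, stays inside pointwise $\kappa$-presentable diagrams; finally the canonical $\kappa$-filtered colimit can be replaced by a $\kappa$-directed one \cite[Theorem 1.5]{rosicky}, and colimits in $\mathcal{K}^{\mathcal{I}}$ are pointwise, which is literally the assertion. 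One wording caveat: ``retract of a $\kappa$-small colimit of generators'' should read ``retract of an object of the closure of the generators under $\kappa$-small colimits,'' since flattening an iterated $\kappa$-small colimit into a single one is not automatic (transition maps between one-step colimits need not lift to maps of diagrams); your subsequent closure argument is insensitive to this, so nothing breaks. As for what each route buys: the paper's citation outsources the work to a result presumably established in greater generality (and usable as a black box elsewhere in the paper, e.g.\ for chains and spans of arbitrary $\kappa$-small shape, which your proof also covers since $\mathcal{I}$ is arbitrary $\kappa$-small), while your proof is self-contained, stays within the standard toolkit of \cite{rosicky}, and pinpoints the one place where $\kappa$-smallness of the shape is essential; it even yields the converse identification for free, since over a $\kappa$-small shape the hom out of a pointwise $\kappa$-presentable diagram is a $\kappa$-small end of homs and hence commutes with $\kappa$-directed colimits, so the $\kappa$-presentables of $\mathcal{K}^{\mathcal{I}}$ are precisely the pointwise $\kappa$-presentable diagrams.
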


\begin{proof}
    \cite[Theorem 6.1]{diaginacc}
\end{proof}

\begin{theorem}
    $\mathcal{C}$ is $\kappa $-lex, regular. Then $\mathbf{Lex}_{\kappa }(\mathcal{C},\mathbf{Set})^{op} = Pro_{\kappa }(\mathcal{C})$ is $\kappa $-lex, regular and the Yoneda-embedding $Y:\mathcal{C}\hookrightarrow \mathbf{Lex}_{\kappa }(\mathcal{C},\mathbf{Set})^{op}$ is $\kappa $-lex, regular.
\end{theorem}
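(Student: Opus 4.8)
The plan is to establish three separate claims: that $Pro_\kappa(\mathcal{C})$ is $\kappa$-lex, that it is regular, and that the Yoneda embedding preserves both structures. I would first note that $\mathbf{Lex}_\kappa(\mathcal{C},\mathbf{Set})$ is locally $\kappa$-presentable (it is a $\kappa$-accessible category closed under $\kappa$-small limits and all colimits, being a reflective subcategory of the presheaf category cut out by the $\kappa$-lex condition). Consequently its opposite $Pro_\kappa(\mathcal{C})$ has all $\kappa$-small limits, giving $\kappa$-lex-ness immediately. The Yoneda embedding $Y$ is $\kappa$-continuous into $\mathbf{Lex}_\kappa(\mathcal{C},\mathbf{Set})$, so into the opposite it preserves $\kappa$-small limits; hence $Y$ is $\kappa$-lex.

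The substantive content is regularity, and here I would exploit Theorem~\ref{positselski}. Recall that $\mathcal{C}$ embeds into $Pro_\kappa(\mathcal{C})$ as the $\kappa$-presentable objects (representables are $\kappa$-presentable in $\mathbf{Lex}_\kappa(\mathcal{C},\mathbf{Set})^{op}$), and that every object of $Pro_\kappa(\mathcal{C})$ is a $\kappa$-cofiltered limit of objects of $\mathcal{C}$. To show $Pro_\kappa(\mathcal{C})$ is regular I must produce pullback-stable image factorizations. The idea is to build these factorizations pointwise along the defining $\kappa$-cofiltered diagrams: given a map $f$ in $Pro_\kappa(\mathcal{C})$, I would present $f$ as a $\kappa$-cofiltered limit of maps $f_i$ in $\mathcal{C}$ (using Theorem~\ref{positselski} to realize the relevant finite diagram — the single arrow, or later a cospan for pullback-stability — as a $\kappa$-cofiltered limit of such diagrams among $\kappa$-presentables). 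Since $\mathcal{C}$ is regular, each $f_i$ factors as a regular epi followed by a mono; I would then take the limit of these factorizations and argue it gives a (regular epi, mono) factorization of $f$ in $Pro_\kappa(\mathcal{C})$, using that $\kappa$-cofiltered limits interact well with the factorization system.

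For pullback-stability of regular epimorphisms in $Pro_\kappa(\mathcal{C})$, I would again use that any cospan in $Pro_\kappa(\mathcal{C})$ is, by Theorem~\ref{positselski}, a $\kappa$-cofiltered limit of cospans in $\mathcal{C}$. Pullbacks of these cospans exist in $\mathcal{C}$ and are computed along the $\kappa$-cofiltered system, so the pullback in $Pro_\kappa(\mathcal{C})$ is the limit of the componentwise pullbacks; since regular epis are stable under pullback in the regular category $\mathcal{C}$, and the class of regular epis in $Pro_\kappa(\mathcal{C})$ is obtained as limits of regular epis, the stability transfers. Finally, $Y$ sends regular epis to regular epis and monos to monos because these are detected componentwise (equivalently, $Y$ preserves the factorization constructed above, as a representable is its own trivial constant diagram).

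The main obstacle I anticipate is the interchange of $\kappa$-cofiltered limits with the image factorization: one must verify that the limit of a diagram of (regular epi, mono) factorizations is again such a factorization in $Pro_\kappa(\mathcal{C})$, i.e.~that the limit of the regular epis is a regular epi and the limit of the monos is a mono, and that no ``collapse'' occurs at the limit. Monos are the easy half since limits preserve monomorphisms, but regular epis are not limit-stable in general, so the delicate point is to show that the particular $\kappa$-cofiltered limits arising from Theorem~\ref{positselski} do preserve regular epimorphisms — this is exactly where the $\kappa$-presentability and the exactness of $\mathbf{Set}$ (where $\kappa$-filtered colimits are exact) must be brought to bear, dualized into $Pro_\kappa(\mathcal{C})$.
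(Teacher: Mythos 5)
Your outline is, up to duality, the paper's own proof: presenting an arrow (or cospan) of $Pro_{\kappa }(\mathcal{C})$ as a $\kappa $-cofiltered limit of diagrams in $\mathcal{C}$ via Theorem~\ref{positselski} is exactly the paper's presentation of every $\kappa $-small diagram of $\mathbf{Lex}_{\kappa }(\mathcal{C},\mathbf{Set})$ as a $\kappa $-directed colimit of diagrams among representables, followed by layer-wise (effective epi, mono) factorization in $\mathcal{C}$ and passage to the colimit. But the step you defer in your last paragraph is not a peripheral verification ``to be brought to bear'' later --- it is the entire content of the theorem, and as written your argument is incomplete precisely there. The paper closes it by citing \cite[Corollary 1.60]{rosicky}: in a locally $\kappa $-presentable category both epimorphisms and effective (equivalently regular, strong) monomorphisms are closed under $\kappa $-filtered colimits. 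Dualized, this says that in $Pro_{\kappa }(\mathcal{C})$ monomorphisms and regular epimorphisms are stable under the $\kappa $-cofiltered limits furnished by Theorem~\ref{positselski}, which is exactly the interchange you flag as the ``delicate point.'' Note also that your division into easy and hard halves dualizes the other way than you state: ``limits preserve monomorphisms'' in $Pro_{\kappa }(\mathcal{C})$ is the (genuinely easy) statement that colimits of epis are epi in $\mathbf{Lex}_{\kappa }(\mathcal{C},\mathbf{Set})$, while the hard half --- limits of regular epis --- is the closure of regular monos under $\kappa $-filtered colimits, for which your gesture at the exactness of $\mathbf{Set}$ is pointing in the right direction (these colimits are computed pointwise and commute with $\kappa $-small limits there) but is not a proof.

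Three further steps are glossed over and should be made explicit. First, identifying the $\kappa $-presentable objects of $\mathbf{Lex}_{\kappa }(\mathcal{C},\mathbf{Set})$ (so the $\kappa $-copresentables of $Pro_{\kappa }(\mathcal{C})$) with $\mathcal{C}$ itself requires that retracts of representables be representable, i.e.~that $\mathcal{C}$ is Cauchy-complete; this holds because a regular category splits idempotents through image factorization, a point the paper makes and your appeal to Theorem~\ref{positselski} silently uses. Second, ``take the limit of these factorizations'' presupposes that the layer-wise factorizations form a diagram at all: the induced maps between the middle terms come from the orthogonality of epis against strong monos (using effective $\Rightarrow $ strong), which needs to be said. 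Third, in the pullback-stability argument the approximating cospans given by Theorem~\ref{positselski} have \emph{arbitrary} maps as legs, not regular epis, even when the limiting leg is a regular epi; the paper repairs this by factoring the relevant legs layer-wise and then pulling back (pushing out) layer-wise, using that the mono part of the resulting limit factorization of a regular (hence strong) epi is invertible. With these three points and \cite[Corollary 1.60]{rosicky} inserted, your sketch becomes the paper's proof; without the closure lemma, the central step simply fails to be justified.
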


\begin{proof}
    For $\kappa =\omega $ this is \cite[Theorem 1]{barr}. Alternatively: \cite[Corollary 10, Proposition 11, Remark 13 in Lecture 14X]{lurie}. We repeat the proof.

    $\mathbf{Lex}_{\kappa }(\mathcal{C},\mathbf{Set})$ is a $\kappa $-accessible category which is complete. Therefore it is locally $\kappa $-presentable, hence cocomplete. So $Pro_{\kappa }(\mathcal{C})$ is complete and cocomplete.

    We claim that $Y:\mathcal{C}\to Pro_{\kappa }(\mathcal{C})$ preserves $\kappa $-small limits and all colimits which exist in $\mathcal{C}$. The latter is clear (the Hom-functor in its first variable turns colimits into limits in $\mathbf{Set}^{\mathcal{C}}$, and those are the same as limits in $\mathbf{Lex}_{\kappa }(\mathcal{C},\mathbf{Set})$). 

    Given the $Y$-image of a $\kappa $-small limit diagram (in $\mathbf{Lex}_{\kappa }(\mathcal{C},\mathbf{Set})$) we have to show that it is a colimit, i.e.~we have to find a unique dashed arrow in

\[
\adjustbox{scale=0.9}{
\begin{tikzcd}
	& F \\
	& {\widehat{x}} \\
	{\widehat{x_i}} && {\widehat{x_j}}
	\arrow[dashed, from=2-2, to=1-2]
	\arrow[curve={height=-6pt}, from=3-1, to=1-2]
	\arrow[from=3-1, to=2-2]
	\arrow[curve={height=6pt}, from=3-3, to=1-2]
	\arrow[from=3-3, to=2-2]
	\arrow[from=3-3, to=3-1]
\end{tikzcd}
}
\]

The images of identities form a compatible family $(a_i\in Fx_i)_i$, that is an element of $Fx=lim\ Fx_i$, and $1_x\mapsto (a_i)_i$ is the unique arrow which makes the diagram commutative.

    We got that $Y:\mathcal{C}\hookrightarrow Pro_{\kappa }(\mathcal{C})$ preserves $\kappa $-small limits and effective epimorphisms. 

By Theorem~\ref{positselski} every $\kappa $-small diagram in $\mathbf{Lex}_{\kappa }(\mathcal{C},\mathbf{Set})$ arises as the $\kappa $-directed colimit of such diagrams formed among $\kappa $-presentable objects. But since $\mathcal{C}$ is regular it is Cauchy-complete (splittings of idempotents are given by image factorization), therefore it follows that retracts of representables are representables. So  every $\kappa $-small diagram is the $\kappa $-directed colimit of such diagrams formed among representables.

In $\mathbf{Lex}_{\kappa }(\mathcal{C},\mathbf{Set})$ epi - effective mono factorizations admit the following description: 

\[
\adjustbox{scale=0.9}{
\begin{tikzcd}
	& F &&& G \\
	\\
	& {\widehat{x_j}} & \bullet && {\widehat{y_j}} \\
	{\widehat{x_i}} & \bullet && {\widehat{y_i}}
	\arrow[from=1-2, to=1-5]
	\arrow[curve={height=6pt}, from=3-2, to=1-2]
	\arrow[two heads, from=3-2, to=3-3]
	\arrow[hook, from=3-3, to=3-5]
	\arrow[curve={height=6pt}, from=3-5, to=1-5]
	\arrow[curve={height=-12pt}, from=4-1, to=1-2]
	\arrow[from=4-1, to=3-2]
	\arrow[two heads, from=4-1, to=4-2]
	\arrow[dashed, from=4-2, to=3-3]
	\arrow[hook, from=4-2, to=4-4]
	\arrow[curve={height=-12pt}, from=4-4, to=1-5]
	\arrow[from=4-4, to=3-5]
\end{tikzcd}
}
\]

That is: write the map as a $\kappa $-directed colimit of maps between representables, take the epi - effective mono factorization of those (which is the $Y$-image of the effective epi - mono factorization in $\mathcal{C}$), get induced maps between the middle-terms (using effective $\Rightarrow $ strong), then take colimits (and use that in locally $\kappa $-presentable categories both epis and effective monos are closed under $\kappa $-filtered colimits, see \cite[Corollary 1.60]{rosicky}).

In particular it follows that every epi, as well as every effective mono is the $\kappa $-filtered colimit of such maps between representables.

Finally, we have to show that the pushout of an effective mono is effective mono. Given a span $F_1\Leftarrow F_0\Rightarrow F_2$ we can write it as a $\kappa $-directed colimit of spans formed among representables. We can form the epi - effective mono factorization of the first legs layer-wise, then take pushouts layer-wise.

\[
\adjustbox{scale=0.9}{
\begin{tikzcd}
	&&&& {F_0} & {F_2} \\
	&& {\widehat{x^0_j}} & {\widehat{x^2_j}} && R & \bullet \\
	{\widehat{x^0_i}} & {\widehat{x^2_i}} && {\widehat{r_j}} & \bullet && {F_1} & \bullet \\
	& {\widehat{r_i}} & \bullet && {\widehat{x_j^1}} & \bullet \\
	&& {\widehat{x_i^1}} & \bullet
	\arrow[from=1-5, to=1-6]
	\arrow[two heads, from=1-5, to=2-6]
	\arrow[two heads, from=1-6, to=2-7]
	\arrow[color={rgb,255:red,181;green,181;blue,181}, dotted, no head, from=2-3, to=1-5]
	\arrow[from=2-3, to=2-4]
	\arrow[two heads, from=2-3, to=3-4]
	\arrow[color={rgb,255:red,181;green,181;blue,181}, dotted, no head, from=2-4, to=1-6]
	\arrow[two heads, from=2-4, to=3-5]
	\arrow[from=2-6, to=2-7]
	\arrow[hook, from=2-6, to=3-7]
	\arrow[hook, from=2-7, to=3-8]
	\arrow[color={rgb,255:red,181;green,181;blue,181}, from=3-1, to=2-3]
	\arrow[from=3-1, to=3-2]
	\arrow[two heads, from=3-1, to=4-2]
	\arrow[color={rgb,255:red,181;green,181;blue,181}, from=3-2, to=2-4]
	\arrow[two heads, from=3-2, to=4-3]
	\arrow[from=3-4, to=3-5]
	\arrow[hook, from=3-4, to=4-5]
	\arrow[hook, from=3-5, to=4-6]
	\arrow[from=3-7, to=3-8]
	\arrow[color={rgb,255:red,181;green,181;blue,181}, dashed, from=4-2, to=3-4]
	\arrow[from=4-2, to=4-3]
	\arrow[hook, from=4-2, to=5-3]
	\arrow[color={rgb,255:red,181;green,181;blue,181}, dashed, from=4-3, to=3-5]
	\arrow[hook, from=4-3, to=5-4]
	\arrow[color={rgb,255:red,181;green,181;blue,181}, dotted, no head, from=4-5, to=3-7]
	\arrow[from=4-5, to=4-6]
	\arrow[color={rgb,255:red,181;green,181;blue,181}, from=5-3, to=4-5]
	\arrow[from=5-3, to=5-4]
	\arrow[color={rgb,255:red,181;green,181;blue,181}, dashed, from=5-4, to=4-6]
\end{tikzcd}
}
\]
We get induced maps between the images as well as between the pushouts. Finally note that epis, effective monos and pushout squares grow up to epis, effective monos and pushout squares, respectively.
\end{proof}

\begin{remark}
    $\mathcal{C}$ is $\kappa $-lex, regular. By \cite[Proposition 0.5]{rosicky} in $\mathbf{Lex}_{\kappa }(\mathcal{C},\mathbf{Set})$ the following classes of monomorphisms coincide: effective, regular, strong, extremal.
\end{remark}

In fact, the proper generalization of the finitary case would include that if $\mathcal{C}$ is $\kappa $-regular then so is $Pro_{\kappa }(\mathcal{C})$. 

\begin{theorem}
\label{proiskreg}
    $\mathcal{C}$ is $\kappa $-regular. Then $\mathbf{Lex}_{\kappa }(\mathcal{C},\mathbf{Set})^{op} = Pro_{\kappa }(\mathcal{C})$ is $\kappa $-regular and the Yoneda-embedding $Y:\mathcal{C}\hookrightarrow \mathbf{Lex}_{\kappa }(\mathcal{C},\mathbf{Set})^{op}$ is $\kappa $-regular.
\end{theorem}

\begin{proof}
    $\kappa \geq \aleph _1$ can be assumed.
    
    We have to prove that a $\kappa $-small transfinite composition of regular monos is a regular mono. We know that this holds for chains of representables, and as (sequential) colimits and regular monos both commute with $\kappa $-filtered colimits it suffices to prove that any $\kappa $-small well-ordered chain of regular monos 
    \[
    (F_0\xhookrightarrow{h_0} F_1\hookrightarrow \dots F_{i}\xhookrightarrow{h_i} \dots )_{i<\gamma <\kappa }
    \]
    is the $\kappa $-filtered colimit of such chains of regular monos going between representables.

    By Theorem~\ref{positselski} it is the $\kappa $-filtered colimit of $\gamma $-indexed chains formed among representables (but the maps are arbitrary). ($\kappa $-presentable objects are representables, since $\mathcal{C}$ is Cauchy-complete.) By taking epi - regular mono factorizations we get:

\[
\adjustbox{width=\textwidth}{
\begin{tikzcd}
	{F_0} & {F_0'} & {F_1} & {F_1'} & {F_2} & {\dots } & {F_{\omega }} & {F_{\omega }'} & {F_{\omega +1}} & \dots \\
	{\dots } & {\dots } & {\dots } & {\dots } & \dots && {\dots } & {\dots } & \dots \\
	{\widehat{x_{0,j'}}} & {\widehat{r_{0,j'}}} & {\widehat{x_{1,j'}}} & {\widehat{r_{1,j'}}} & {\widehat{x_{2,j'}}} & {\dots } & {\widehat{x_{\omega ,j'}}} & {\widehat{r_{\omega ,j'}}} & {\widehat{x_{\omega +1,j'}}} & \dots \\
	{\widehat{x_{0,j}}} & {\widehat{r_{0,j}}} & {\widehat{x_{1,j}}} & {\widehat{r_{1,j}}} & {\widehat{x_{2,j}}} & \dots & {\widehat{x_{\omega ,j}}} & {\widehat{r_{\omega ,j}}} & {\widehat{x_{\omega +1,j}}} & \dots
	\arrow["\cong", from=1-1, to=1-2]
	\arrow["{h_0'}", hook, from=1-2, to=1-3]
	\arrow["\cong", from=1-3, to=1-4]
	\arrow["{h_1'}", hook, from=1-4, to=1-5]
	\arrow[from=1-5, to=1-6]
	\arrow["\cong", from=1-7, to=1-8]
	\arrow["{h_{\omega }'}", hook, from=1-8, to=1-9]
	\arrow[from=1-9, to=1-10]
	\arrow[from=3-1, to=2-1]
	\arrow[two heads, from=3-1, to=3-2]
	\arrow[dashed, from=3-2, to=2-2]
	\arrow[hook, from=3-2, to=3-3]
	\arrow[from=3-3, to=2-3]
	\arrow[two heads, from=3-3, to=3-4]
	\arrow[dashed, from=3-4, to=2-4]
	\arrow[hook, from=3-4, to=3-5]
	\arrow[from=3-5, to=2-5]
	\arrow[from=3-5, to=3-6]
	\arrow[from=3-7, to=2-7]
	\arrow[two heads, from=3-7, to=3-8]
	\arrow[dashed, from=3-8, to=2-8]
	\arrow[hook, from=3-8, to=3-9]
	\arrow[from=3-9, to=2-9]
	\arrow[from=3-9, to=3-10]
	\arrow[from=4-1, to=3-1]
	\arrow[two heads, from=4-1, to=4-2]
	\arrow[dashed, from=4-2, to=3-2]
	\arrow[hook, from=4-2, to=4-3]
	\arrow[from=4-3, to=3-3]
	\arrow[two heads, from=4-3, to=4-4]
	\arrow[dashed, from=4-4, to=3-4]
	\arrow[hook, from=4-4, to=4-5]
	\arrow[from=4-5, to=3-5]
	\arrow[from=4-5, to=4-6]
	\arrow[from=4-7, to=3-7]
	\arrow[two heads, from=4-7, to=4-8]
	\arrow[dashed, from=4-8, to=3-8]
	\arrow[hook, from=4-8, to=4-9]
	\arrow[from=4-9, to=3-9]
	\arrow[from=4-9, to=4-10]
\end{tikzcd}
}
\]

We assume that the $\kappa $-filtered category indexing the $\gamma $-sequences going between representables is the canonical one (that is: all such sequences below $(F_i)_i$), and we call it $D$. It is closed under $\kappa $-small colimits (as $\kappa $-small colimits of $\kappa $-presentable objects are $\kappa $-presentable).

Recall that a full subcategory $A\subseteq D$ is cofinal if every $d\in D$ admits an arrow to some $a\in A$. It is closed if it is closed under colimits of $\kappa $-small well-ordered chains (in particular it is closed under isomorphic copies).

We claim that for each $i<\gamma $ those indexes $j\in D$ for which the epi $\widehat{x_{i,j}}\twoheadrightarrow \widehat{r_{i,j}}$ is an iso, form a full cofinal closed subcategory $A_i\subseteq D$. It is clearly closed: any colimit of isos is an iso. It is cofinal: given any $j_0\in D$ the map $\widehat{r_{i,j_0}}\to F_i$ factors through some $\widehat{x_{i,j_1}}$, such that the triangle
\[\begin{tikzcd}
	{\widehat{x_{i,j_1}}} \\
	\\
	{\widehat{x_{i,j_0}}} && {\widehat{r_{i,j_0}}}
	\arrow[from=3-1, to=1-1]
	\arrow[two heads, from=3-1, to=3-3]
	\arrow[from=3-3, to=1-1]
\end{tikzcd}\]
commutes. Similarly we can find $\widehat{x_{i,j_1}}\to \widehat{r_{i,j_2}}$, and the colimit of the maps $\widehat{x_{i,j_n}}\to \widehat{r_{i,j_{n+1}}}$ in the zig-zag is an iso. Since $\kappa $ is uncountable this is a map $\widehat{x_{i,j_{\omega }}}\to \widehat{r_{i,j_{\omega }}}$ in the diagram and $j_{\omega }\in A_i$ is above $j_0$.

It remains to prove that the intersection of $<\kappa $ full cofinal closed subcategories in a $\kappa $-filtered category is full cofinal closed (then we can restrict ourselves to this cofinal subdiagram and by induction on $\gamma $ the theorem follows). The proof is the same as for ordinals.

That is, let $A_i\subseteq D$ be full cofinal closed for $i<\gamma <\kappa $. $\bigcap A_i $ is full and closed, we need that it is cofinal. By induction on $\gamma $ we can assume $A_0\supseteq A_1\supseteq \dots $. Take $d\in D$. Then build a (not necessarily cocontinuous) $\gamma $-chain $d\to x_0\to x_1 \to \dots $ such that $x_i\in A_i$. The colimit of this chain lives in each $A_i$, hence in the intersection and it is above $d$.
\end{proof}

\begin{theorem}
\label{elemiffregmono}
    $\mathcal{C}$ is $\kappa $-lex, regular, $F,G:\mathcal{C}\to \mathbf{Set}$ are $\kappa $-lex. Then $\alpha :F\Rightarrow G$ is elementary iff it is a regular monomorphism in $\mathbf{Lex}_{\kappa }(\mathcal{C},\mathbf{Set})$.
\end{theorem}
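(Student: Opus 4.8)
The plan is to reformulate both conditions as lifting properties against the same class of maps. By Remark~\ref{keyobserv}, $\alpha$ is elementary iff it admits an upper-triangle diagonal against every map $\widehat{i}=-\circ i:\widehat{x}\to\widehat{u}$ with $i:u\hookrightarrow x$ a mono in $\mathcal{C}$. By Yoneda every map between representables $\widehat{x}\to\widehat{u}$ is some $\widehat{i}$, and since $Y:\mathcal{C}\hookrightarrow Pro_{\kappa}(\mathcal{C})$ is $\kappa$-lex and fully faithful it preserves and reflects monos; hence $\widehat{i}$ is an epimorphism of $\mathbf{Lex}_{\kappa}(\mathcal{C},\mathbf{Set})$ exactly when $i$ is monic, so the $\widehat{i}$ are precisely the epimorphisms between representables. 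The direction regular $\Rightarrow$ elementary is then immediate: by the coincidence of the effective, regular, strong and extremal monos, a regular mono $\alpha$ is strong, hence lifts (uniquely) against every epimorphism, in particular against each $\widehat{i}$, which is elementarity.

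For elementary $\Rightarrow$ regular, first note $\alpha$ is pointwise mono (the first Proposition), hence monic in $\mathbf{Lex}_{\kappa}(\mathcal{C},\mathbf{Set})$. I will show it is a strong mono by lifting against an arbitrary epimorphism $e:P\twoheadrightarrow Q$. By the earlier theorem we may write $P=\operatorname{colim}\widehat{x}_i$, $Q=\operatorname{colim}\widehat{r}_i$ and $e=\operatorname{colim}(\widehat{j}_i:\widehat{x}_i\to\widehat{r}_i)$ as a $\kappa$-filtered colimit of epis between representables, with coprojections $c_i:\widehat{x}_i\to P$ and $b_i:\widehat{r}_i\to Q$. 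Given a square $f:P\to F$, $g:Q\to G$ with $\alpha f=ge$, precomposition with $c_i$ yields for each $i$ a square with $\widehat{j}_i$ on the left and $\alpha$ on the right; elementarity gives a diagonal $d_i:\widehat{r}_i\to F$ with $d_i\widehat{j}_i=fc_i$, and since $\widehat{j}_i$ is epi one also obtains the lower triangle $\alpha d_i=gb_i$.

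The obstacle is that the $d_i$, being mere fillers, need not a priori be compatible along the diagram. This is exactly where monicity of $\alpha$ is used: as $\alpha$ is monic, $d_i$ is the \emph{unique} map satisfying $\alpha d_i=gb_i$, and the cocone identity for $b$ forces $d_{i'}\circ(\widehat{r}_i\to\widehat{r}_{i'})=d_i$. Thus the $d_i$ assemble into a unique $d:Q\to F$ with $db_i=d_i$; joint epimorphicity of the $c_i$ and $b_i$ then gives $de=f$ and $\alpha d=g$, so $d$ is the desired (unique) filler. Hence $\alpha$ is a strong, equivalently regular, monomorphism. I expect the only genuinely delicate point to be this compatibility of the fillers, which the uniqueness argument settles; everything else is a formal manipulation of the colimit presentation of epimorphisms.
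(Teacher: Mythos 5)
Your proof is correct and follows essentially the same route as the paper: both reduce elementarity via Remark~\ref{keyobserv} to a lifting property against epimorphisms between representables (identified, as you do, with $\widehat{i}$ for $i$ monic), invoke the colimit decomposition of arbitrary epimorphisms from the earlier theorem, and conclude via strong $=$ regular. The only difference is presentational: where the paper cites the general fact that left orthogonality classes are closed under colimits (uniqueness of the lift coming from $\widehat{i}$ being epi), you unpack that closure argument by hand, deriving compatibility of the fillers $d_i$ from monicity of $\alpha$ --- a valid, equivalent bookkeeping of the same step.
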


\begin{proof}
    By Remark~\ref{keyobserv} $\alpha $ is elementary iff for every mono $i:u\hookrightarrow x$, the square
\[\begin{tikzcd}
	{\widehat{x}} & F \\
	{\widehat{u}} & G
	\arrow[from=1-1, to=1-2]
	\arrow["{\widehat{i}}"', from=1-1, to=2-1]
	\arrow["\alpha", from=1-2, to=2-2]
	\arrow[from=2-1, to=2-2]
\end{tikzcd}\]
    admits a diagonal map making the upper triangle commutative.
    
    But $\widehat{i}$ is epi, hence the lower triangle commutes for free and the lift is unique. That is, $\alpha $ is elementary iff it is right orthogonal to every epimorphism between representables. Since every epimorphism is the $\kappa $-filtered colimit of epimorphisms between representables, and left orthogonality classes are closed under colimits, this is equivalent to $\alpha $ being right orthogonal to every epimorphism, i.e.~to $\alpha $ being a strong mono. But strong = regular.
\end{proof}

\begin{remark}
\label{inpretopmonosarereg}
    Let $\mathcal{C}$ be a regular category in which monomorphisms are regular (e.g.~a pretopos, see \cite[Corollary A1.4.9]{elephant}). We claim that in $\mathbf{Lex}(\mathcal{C},\mathbf{Set})$ every monomorphism is regular. This follows, as in $\mathbf{Lex}(\mathcal{C},\mathbf{Set})$ epimorphisms are filtered colimits of $Y$-images of $\mathcal{C}$-monos, these are regular epimorphisms and those are closed under filtered colimits (as a left class), so in $\mathbf{Lex}(\mathcal{C},\mathbf{Set})$ every epimorphism is regular. Hence epi+mono implies iso, and by the existence of epi - regular mono factorizations we get that all monos are regular.

    From Proposition \ref{atregmonopb} we get that in $\mathbf{Lex}(\mathcal{C},\mathbf{Set})$ every monomorphism is an elementary map. By the above theorem these two arguments prove the same thing as elementary = regular mono.
\end{remark}

\begin{theorem}
\label{lextoreg}
    $\mathcal{C}$ is $\kappa $-regular. Then every $\kappa $-lex functor in $\mathbf{Lex}_{\kappa }(\mathcal{C},\mathbf{Set})$ admits a regular mono\-morphism (i.e.~elementary map) to a $\kappa $-regular functor.
\end{theorem}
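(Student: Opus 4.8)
The plan is to realise the desired $\kappa$-regular target as a transfinite ``saturation'' of $F$, built entirely from pushouts and colimits inside the locally $\kappa$-presentable category $\mathbf{Lex}_{\kappa}(\mathcal{C},\mathbf{Set})$, and then to certify that the structure map $F\to G$ is an effective mono, hence elementary by Theorem~\ref{elemiffregmono}. A $\kappa$-lex functor $G$ is $\kappa$-regular precisely when it sends every regular epimorphism $e:x\twoheadrightarrow y$ of $\mathcal{C}$ to a surjection, so the only obstruction to $F$ being $\kappa$-regular is a pair $(e,b)$ with $e:x\twoheadrightarrow y$ a regular epi and $b\in Fy$ not in the image of $Fe$. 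Given such a pair, Yoneda presents $b$ as a map $\widehat{y}\to F$, and since $Y:\mathcal{C}\hookrightarrow Pro_{\kappa}(\mathcal{C})$ is $\kappa$-regular (Theorem~\ref{proiskreg}) the map $\widehat{e}:\widehat{y}\to\widehat{x}$ is an effective mono in $\mathbf{Lex}_{\kappa}(\mathcal{C},\mathbf{Set})$. Pushing $\widehat{e}$ out along $b$ repairs this one defect: the element named by $\widehat{x}\to(\text{pushout})$ is a lift of $b$ along $e$.

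First I would carry out the repair globally and iterate. Set $F_0=F$; given $F_\alpha$, let $S_\alpha$ be the set of all pairs $(e,b)$ with $e:x\twoheadrightarrow y$ a regular epi and $b\in F_\alpha y$, and define $F_{\alpha+1}$ by the single pushout
\[
\begin{tikzcd}
\coprod_{(e,b)\in S_\alpha}\widehat{y} \ar[r, "{(b)}"] \ar[d, "{\coprod\widehat{e}}"'] & F_\alpha \ar[d] \\
\coprod_{(e,b)\in S_\alpha}\widehat{x} \ar[r] & F_{\alpha+1}
\end{tikzcd}
\]
taking unions at limit stages, and put $G=\mathrm{colim}_{\alpha<\kappa}F_\alpha$. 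Each $F_\alpha$ and $G$ is again $\kappa$-lex, being an object of $\mathbf{Lex}_{\kappa}(\mathcal{C},\mathbf{Set})$. Since representables are $\kappa$-presentable and the chain is $\kappa$-filtered, every $b\in Gy=\mathrm{colim}_\alpha F_\alpha y$ already lives in some $F_\alpha y$, so its lift was adjoined at stage $\alpha+1$ and survives to $G$; hence $G$ sends every regular epi to a surjection and is $\kappa$-regular.

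It remains to see that $F\to G$ is an effective mono, and this is where the work is. Each transition $F_\alpha\to F_{\alpha+1}$ is a pushout of $\coprod\widehat{e}$, and pushouts preserve effective monos (as established for $\mathbf{Lex}_{\kappa}(\mathcal{C},\mathbf{Set})$ above), so it suffices to know that a coproduct of effective monos is an effective mono. Writing such a coproduct as the transfinite composite of the pushouts that insert one summand-map at a time, and recalling that $F\to G$ is itself the length-$\kappa$ composite of the $F_\alpha\to F_{\alpha+1}$, everything reduces to one lemma: \emph{in $\mathbf{Lex}_{\kappa}(\mathcal{C},\mathbf{Set})$ effective monos are closed under transfinite composition of arbitrary length}. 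I would prove this by induction on the length $\theta$: the successor step uses that effective${}={}$strong monos compose; a limit of cofinality ${\geq}\kappa$ is a $\kappa$-filtered colimit of effective monos with constant domain, hence an effective mono by the closure of effective monos under $\kappa$-filtered colimits recalled above; and for a limit of cofinality $\mu<\kappa$ one chooses a continuous cofinal $\mu$-chain and invokes the $\kappa$-\emph{small} transfinite composition closure of Theorem~\ref{proiskreg}.

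The main obstacle is exactly this last lemma. The excerpt only supplies closure of effective monos under $\kappa$-small transfinite composition (Theorem~\ref{proiskreg}) and under $\kappa$-filtered colimits, whereas the construction forces composites of length $\kappa$ and coproducts indexed by the (possibly large) sets $S_\alpha$, i.e.\ passage through limit ordinals $\geq\kappa$ of small cofinality; bridging this gap by interleaving the two closure properties along the cofinality bookkeeping above is the crux. A secondary point to verify is that repairing one defect never spoils another: since every $F_\alpha\to F_{\alpha+1}$ is monic, nothing is identified, so regular epis already sent to surjections at an earlier stage stay so. Once the lemma is in place, $F\to G$ is an effective mono, hence elementary, and $G$ is the required $\kappa$-regular functor.
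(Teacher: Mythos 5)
Your proposal is correct and coincides with the paper's own proof: the paper runs the small object argument (via \cite[Theorem 2.1.14]{hovey}) with $I=\{\text{regular monos between representables}\}$, which is exactly your iterated-pushout saturation, and it establishes closure of effective monos under transfinite composition of arbitrary length by the very induction you sketch ($<\kappa$ composites from Theorem~\ref{proiskreg}, limit steps via partial composites and closure of effective monos under $\kappa$-filtered colimits). In particular, the lemma you flag as the ``main obstacle'' is not a gap but is proved in the paper precisely by the interleaving of Theorem~\ref{proiskreg} with $\kappa$-filtered colimit closure that you describe.
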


\begin{proof}
    For $\kappa =\omega $ this is \cite[Proposition 10, Lecture 15X]{lurie}. We repeat the proof. 

    First note that the transfinite composition of a $\kappa $-indexed chain of effective monos is an effective mono. This follows as we can get the composite as a $\kappa $-indexed sequential colimit in the arrow category, formed by all partial composites, and those are effective monos by Theorem~\ref{proiskreg}. So then the composite is effective mono by \cite[Corollary 1.60]{rosicky}.
\[\begin{tikzcd}
	{F_0} & {F_0} & {F_0} & \dots & {F_0} & {\dots } \\
	{F_0} & {F_1} & {F_2} & {\dots } & {F_{\omega }} & {\dots }
	\arrow[Rightarrow, no head, from=1-1, to=1-2]
	\arrow["1"', from=1-1, to=2-1]
	\arrow[Rightarrow, no head, from=1-2, to=1-3]
	\arrow["{j_0}"', from=1-2, to=2-2]
	\arrow[Rightarrow, no head, from=1-3, to=1-4]
	\arrow["{j_{0,2}=j_1\circ j_0}", from=1-3, to=2-3]
	\arrow[Rightarrow, no head, from=1-5, to=1-6]
	\arrow["{j_{0,\omega }}", from=1-5, to=2-5]
	\arrow["{j_0}", from=2-1, to=2-2]
	\arrow["{j_1}", from=2-2, to=2-3]
	\arrow[from=2-3, to=2-4]
	\arrow[from=2-5, to=2-6]
\end{tikzcd}\]
More generally, a transfinite induction proves that for any ordinal $\gamma $, the transfinite composition of a $\gamma $-sequence of effective monos is an effective mono. The successor case is trivial and in limit steps the previous argument applies.

Now we use the small object argument (\cite[Theorem 2.1.14]{hovey}) with the set: $I$=$\{$regular monos between representables$\}$ (corresponding to the effective epis in $\mathcal{C}$). We get that for any $\kappa $-lex $F$, the terminal map $F\Rightarrow *$ can be factored as $F\Rightarrow F'\Rightarrow *$, such that the first map is a transfinite composition of pushouts of maps from $I$, hence a regular mono, while the second map has the right lifting property against all maps in $I$. It means precisely that $F'$ preserves effective epis.     
\end{proof}

\begin{theorem}
\label{regtocoh}
    $\kappa $ is strongly compact (or $\kappa =\omega $), $\mathcal{C}$ is a $\kappa $-coherent category with $\kappa $-small disjoint coproducts. Then every $\kappa $-regular functor in $\mathbf{Lex}_{\kappa }(\mathcal{C},\mathbf{Set})$ admits a regular monomorphism to a (small) product of $\kappa $-coherent functors.
\end{theorem}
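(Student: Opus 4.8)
The plan is to reduce the statement to a separation property and then to manufacture the separating coherent functors by a product of ultraproducts, the ultraproducts being exactly where strong compactness enters. Concretely, I would first reduce to the following claim: for every subobject $u\hookrightarrow x$ in $\mathcal{C}$ and every $a\in Fx$ with $a\notin Fu$ there is a $\kappa$-coherent functor $H_{a,u}$ together with a morphism $\alpha_{a,u}\colon F\Rightarrow H_{a,u}$ in $\mathbf{Lex}_\kappa(\mathcal{C},\mathbf{Set})$ such that $(\alpha_{a,u})_x(a)\notin H_{a,u}(u)$. Note that a \emph{single} coherent target cannot reflect a genuinely non-coherent $F$ (that would force $F$ to preserve the join $\bigvee u_i$), so it is essential that different constraints be witnessed in different factors of a product.

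Granting the claim, set $H:=\prod_{(a,u)}H_{a,u}$ --- a small product, since $\mathcal{C}$ is small and well-powered and each $Fx$ is a set --- and let $\alpha=(\alpha_{a,u})\colon F\Rightarrow H$. Products and the relevant pullbacks are computed pointwise in $\mathbf{Set}^{\mathcal{C}}$, so the naturality square of $\alpha$ at a mono $u\hookrightarrow x$ is a pullback precisely when, for $a\in Fx$, having $(\alpha_{a',u'})_x(a)\in H_{a',u'}(u)$ for every index $(a',u')$ implies $a\in Fu$; and this is exactly what the $(a,u)$-th factor enforces (the reverse inclusion being naturality). Hence $\alpha$ is elementary; pointwise monicity is the special case of the diagonals $x\hookrightarrow x\times x$ (or simply elementary $\Rightarrow$ pointwise mono). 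By Theorem~\ref{elemiffregmono}, $\alpha$ is a regular monomorphism into a product of $\kappa$-coherent functors, as required.

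For the separation claim, fix a constraint $(a,u\hookrightarrow x)$ and let $\Delta$ be the set of all $\kappa$-small jointly covering families of $\mathcal{C}$ (the disjunctions and disjoint-coproduct covers, including the empty cover of the initial object). For each $s$ in the set $P_\kappa(\Delta)$ of $\kappa$-small subsets of $\Delta$ I would construct a $\kappa$-regular functor $F_s$ and a map $\eta_s\colon F\Rightarrow F_s$ that turns every family in $s$ into a jointly surjective one while keeping $(\eta_s)_x(a)\notin F_s(u)$; call such an $F_s$ \emph{$s$-coherent}. Then, using that $\kappa$ is strongly compact, I pick a fine $\kappa$-complete ultrafilter $\mathcal{U}$ on $P_\kappa(\Delta)$ and put $H_{a,u}:=\prod_s F_s/\mathcal{U}$, with $\alpha_{a,u}$ induced by the $\eta_s$. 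Because $\mathcal{U}$ is $\kappa$-complete, the {\L}o\'s theorem holds for $\kappa$-small limits, images and $<\kappa$-ary disjunctions; because $\mathcal{U}$ is fine, each fixed family $\delta\in\Delta$ lies in $\mathcal{U}$-almost every $s$, and for those $s$ the functor $F_s$ makes $\delta$ jointly surjective. A $\kappa$-completeness argument (a $<\kappa$-indexed choice of disjunct is constant on a $\mathcal{U}$-large set) then shows that $H_{a,u}$ makes \emph{every} family in $\Delta$ jointly surjective, i.e.\ is $\kappa$-coherent, while {\L}o\'s applied to the constraint gives $(\alpha_{a,u})_x(a)\notin H_{a,u}(u)$. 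For $\kappa=\omega$ this collapses to ordinary compactness and the claim is Deligne's completeness theorem (cf.~\cite{lurie}).

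The hard part will be the two ends of this construction. The first is the layer-wise step: producing, for a $\kappa$-small set $s$ of covers, an $s$-coherent $F_s$ receiving a map from the merely regular $F$ and still omitting $a$ from $u$. I expect to build $F_s$ by a small-object/transfinite iteration of length $<\kappa$ that forces each element sitting in a cover-union into one of the covering pieces, always choosing a disjunct that does not push $a$ into $u$; that such a choice can be maintained throughout is a local model-existence (consistency) statement, and it is the honest content of the finitary completeness theorem transported to the $\kappa$-small setting. The second delicate point is verifying that $\kappa$-complete {\L}o\'s genuinely upgrades ``$s$-coherent for each $s$'' to ``$\kappa$-coherent'': this is precisely the step that fails for a non-complete ultrafilter, and it is where strong compactness of $\kappa$ --- equivalently, the existence of fine $\kappa$-complete ultrafilters on every $P_\kappa(\Delta)$ --- is indispensable.
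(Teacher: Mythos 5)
The paper itself offers no argument for this theorem --- it simply cites \cite[Theorem 5.14]{bvalued} --- so your sketch has to stand on its own. Its outer shell does: the reduction to a separation claim indexed by pairs $(a, u\hookrightarrow x)$ with $a\in Fx\setminus Fu$, the pointwise verification that the induced map into the product is elementary, the appeal to Theorem~\ref{elemiffregmono}, and the strong-compactness step (a fine $\kappa$-complete ultrafilter on $P_\kappa(\Delta)$, with $\kappa$-completeness giving {\L}o\'s for $\kappa$-small limits and making a $<\kappa$-indexed choice of disjunct constant on a $\mathcal{U}$-large set) are all correct and are indeed the standard way a strongly compact cardinal should be deployed here.

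The genuine gap is the layer-wise step, which you explicitly leave as an expectation, and it is the actual content of the theorem. Two concrete problems. (i) The one-step consistency lemma is only asserted: given $b\in F(y)$ and an effective-epic family $(v_i\hookrightarrow y)_{i<\gamma}$, $\gamma <\kappa$, you must produce an $i$ such that forcing $b\in v_i$ --- the pushout of $b:\widehat{y}\to F$ along $\widehat{y}\to\widehat{v_i}$ in $\mathbf{Lex}_\kappa$, followed by the regularization of Theorem~\ref{lextoreg} (harmless, since that map is a regular mono, hence elementary, hence reflects membership in $u$) --- keeps $a$ out of $u$. This is provable: write $F=\mathrm{colim}_j\,\widehat{x_j}$ as a $\kappa$-filtered colimit of representables, so the pushout is $\mathrm{colim}_j\,\widehat{x_j\times_y v_i}$; if every $i$ failed, then at a common stage $j$ each composite $x_j\times_y v_i\to x_j\xrightarrow{a_j} x$ would factor through $u$, and since the pulled-back family $(x_j\times_y v_i\to x_j)_i$ is effective-epic (covers are pullback-stable in a $\kappa$-coherent category), $a_j$ itself would factor through $u$, i.e.\ $a\in F(u)$, a contradiction. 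Nothing of this sort appears in your sketch. (ii) The iteration bookkeeping fails as stated: a transfinite iteration of length $<\kappa$ cannot close off the construction, since new elements appear at every stage and $F$ may be large; the chain must have length at least $\kappa$, and then at limit stages of cofinality $<\kappa$ the colimit in $\mathbf{Lex}_\kappa$ is \emph{not} computed pointwise (it involves a $\kappa$-lex reflection that can identify the image of $a$ with an element of $u$), so your invariant can be destroyed precisely where you do not look. Controlling the limit stages requires the exactness of $Pro_\kappa(\mathcal{C})$ established in Theorem~\ref{proiskreg}; this ``transfinite transitivity'' issue is exactly what separates the infinitary case from Deligne's $\kappa=\omega$ case, and it is also where the hypothesis of $\kappa$-small \emph{disjoint} coproducts --- which your argument never genuinely uses --- plausibly earns its keep, by converting a $<\kappa$-ary cover into a single effective epi $\coprod_i v_i\twoheadrightarrow y$ so that the forcing stays within the regular small-object argument of Theorem~\ref{lextoreg}. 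As it stands, you have a correct architecture wrapped around an unproved core.
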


\begin{proof}
    \cite[Theorem 5.14]{bvalued}.
\end{proof}

The following is \cite[Definition 5.1.6]{kashiwara}:

\begin{definition}
    A map is a strict monomorphism if it is the joint equalizer of the class of all parallel pairs which it equalizes.
\end{definition}

\begin{remark}
    In an accessible category a map is a strict monomorphism iff it is the joint equalizer of a set of parallel pairs. I.e.~if $\mathcal{A}$ is $\lambda $-accessible and $f:M\to N $ is a strict monomorphism such that $N$ is $\mu $-presentable for some $\mu \vartriangleright \lambda $ then $f$ is the joint equalizer of the pairs $(g_i,h_i:N\rightrightarrows N_i \ |\ g_if=h_if \text{ and $N_i$ is $\mu $-presentable} )$.
\end{remark}

\begin{remark}
    In a category with pushouts strict = regular = effective (if an arrow equalizes the cokernel pair of $f$ then it equalizes all pairs that $f$ equalizes).  In an accessible category with small products strict = regular (a map equalizes $(g_i,h_i:N\rightrightarrows N_i)_i$ iff it equalizes $\langle g_i \rangle ,\langle h_i \rangle :N\rightrightarrows \prod _i N_i$).
\end{remark}

\begin{theorem}
\label{main}
    $\kappa $ is strongly compact (or $\kappa =\omega$), $(\mathcal{C},E)$ is a $\kappa $-site s.t.~every $E$-family has $<\kappa $ legs, $\mathcal{A}=Mod_{\kappa }(\mathcal{C},E)$ is $\lambda $-accessible for some regular $\lambda \geq \kappa $. Then $\lambda $-pure implies strict mono.
\end{theorem}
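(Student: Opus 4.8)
The plan is to chain together the results established earlier in the excerpt. Let $\alpha : M \to N$ be a $\lambda$-pure map in $\mathcal{A} = Mod_{\kappa}(\mathcal{C}, E)$. Since the hypotheses of Proposition~\ref{purethenelem} are met ($\kappa \leq \lambda$, with $\mathcal{A}$ being $\lambda$-accessible), $\alpha$ is elementary as a map of $\kappa$-lex functors $\mathcal{C} \to \mathbf{Set}$. The main subtlety here is that to apply Theorem~\ref{elemiffregmono} I need $\mathcal{C}$ to be regular. A $\kappa$-coherent category with $\kappa$-small disjoint coproducts is in particular $\kappa$-regular, so the underlying category $\mathcal{C}$ of the $\kappa$-site is regular and I may invoke Theorem~\ref{elemiffregmono}: being elementary, $\alpha$ is a regular monomorphism in $\mathbf{Lex}_{\kappa}(\mathcal{C}, \mathbf{Set})$. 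Thus $\alpha$ is the equalizer of some parallel pair $\beta_1, \beta_2 : N \rightrightarrows F$ where $F$ is merely $\kappa$-lex.

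Next I would improve the target $F$ step by step. By Theorem~\ref{lextoreg} (which applies since $\mathcal{C}$ is $\kappa$-regular) there is a regular monomorphism $\iota : F \hookrightarrow F'$ with $F'$ a $\kappa$-regular functor. Then by Theorem~\ref{regtocoh} (whose hypotheses hold: $\kappa$ is strongly compact or $\kappa = \omega$, and $\mathcal{C}$ is $\kappa$-coherent with $\kappa$-small disjoint coproducts) there is a regular monomorphism $\jmath : F' \hookrightarrow \prod_{s \in S} N_s$ into a small product of $\kappa$-coherent functors. Crucially, the $\kappa$-coherent functors $N_s$ are exactly the objects of $\mathcal{A} = Mod_{\kappa}(\mathcal{C}, E)$ — this is where I expect to have to be careful, since I must check that the $E$-preservation condition cutting out $\mathcal{A}$ inside $\mathbf{Lex}_{\kappa}(\mathcal{C}, \mathbf{Set})$ is implied by $\kappa$-coherence once the $E$-families have $<\kappa$ legs. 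The condition that each $E$-family have $<\kappa$ legs should force the jointly-surjective requirement to become a consequence of the functor preserving the relevant $\kappa$-small coherent covers, so that $\kappa$-coherent functors land in $\mathcal{A}$ as genuine models.

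Composing, I obtain a single regular monomorphism $m := \jmath \circ \iota \circ \beta_i$-data giving that $\alpha$ is the joint equalizer of the pairs obtained by post-composing $\beta_1, \beta_2$ with the coordinate projections $\pi_s : \prod_t N_t \to N_s$. Explicitly, a map factors through $\alpha$ iff it equalizes $\beta_1, \beta_2$, iff (since $\jmath \circ \iota$ is mono) it equalizes $\jmath \iota \beta_1$ and $\jmath \iota \beta_2$, iff it equalizes $\pi_s \jmath \iota \beta_1$ and $\pi_s \jmath \iota \beta_2$ for every $s \in S$. Setting $g_s := \pi_s \jmath \iota \beta_1$ and $h_s := \pi_s \jmath \iota \beta_2$, these are a \emph{set} of parallel pairs $N \rightrightarrows N_s$ with $N_s \in \mathcal{A}$, and $\alpha$ is their joint equalizer.

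Finally I would record that this exhibits $\alpha$ as a strict monomorphism. By the first Remark following the definition of strict monomorphism, in an accessible category a map which is the joint equalizer of a set of parallel pairs (with codomains in the category) is a strict monomorphism. The hard part, as flagged, will not be the chaining itself but verifying that the objects $N_s$ produced by Theorem~\ref{regtocoh} genuinely lie in $\mathcal{A}$ — i.e.\ that a $\kappa$-coherent functor satisfies the $E$-preservation condition of the site — and confirming that all constructions take place inside $\mathbf{Lex}_{\kappa}(\mathcal{C}, \mathbf{Set})$ where $\mathcal{A}$ sits as a full subcategory closed under the relevant equalizers, so that the joint equalizer computed among functors coincides with $\alpha$ in $\mathcal{A}$.
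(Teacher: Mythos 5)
Your chain of reductions from ``elementary'' onward matches the paper's, but the proof as written rests on a misreading of the hypotheses. In Theorem~\ref{main} the pair $(\mathcal{C},E)$ is only a $\kappa$-site: $\mathcal{C}$ is $\kappa$-lex, and $E$ is a set of cospan families each with $<\kappa$ legs. It is \emph{not} assumed that $\mathcal{C}$ is $\kappa$-coherent or has $\kappa$-small disjoint coproducts --- those are the hypotheses of Theorem~\ref{regtocoh}, which you have silently imported into the ambient setting. Your sentence ``a $\kappa$-coherent category with $\kappa$-small disjoint coproducts is in particular $\kappa$-regular, so the underlying category $\mathcal{C}$ of the $\kappa$-site is regular'' assumes exactly what is missing; with $\mathcal{C}$ merely $\kappa$-lex you cannot invoke Theorem~\ref{elemiffregmono}, Theorem~\ref{lextoreg} or Theorem~\ref{regtocoh} at all. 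Moreover the step you flagged as the hard part is genuinely false for the original site: a $\kappa$-coherent functor need not preserve $E$, since an $E$-family need not be covering for any coherent structure on $\mathcal{C}$ (for instance $E$ may contain a single non-covering monomorphism $u\hookrightarrow c$, which a generic coherent functor will not send to a surjection). So even granting coherence of $\mathcal{C}$, the factors $N_s$ produced by Theorem~\ref{regtocoh} need not lie in $\mathcal{A}$, and the $<\kappa$-legs hypothesis does not rescue this.

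The missing idea --- the entire first paragraph of the paper's proof --- is to replace the site rather than massage the functors: embed $\mathcal{C}$ into the $\kappa$-topos $Sh(\mathcal{C},\langle E\rangle _{\kappa })$ via $\# Y$ and close the image under $\kappa$-small disjoint coproducts, effective epi--mono factorizations and $\kappa$-small limits, obtaining a small $\kappa$-coherent full subcategory $\widetilde{\mathcal{C}}$ with $\kappa$-small disjoint coproducts. By the comparison lemma $Sh(\mathcal{C},\langle E\rangle _{\kappa })\simeq Sh(\widetilde{\mathcal{C}},\{<\kappa \text{ eff.~epic families}\})$, whence $\mathcal{A}\simeq \mathbf{Coh}_{\kappa }(\widetilde{\mathcal{C}},\mathbf{Set})$. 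This is where the $<\kappa$-legs hypothesis actually enters (the generated topology is $\kappa$-small, and the $E$-families become $\kappa$-small effective-epimorphic families in $\widetilde{\mathcal{C}}$), and it dissolves your worry by fiat: over $\widetilde{\mathcal{C}}$ the models are by definition the $\kappa$-coherent functors, so the product factors automatically lie in $\mathcal{A}$. After this replacement, your argument --- purity $\Rightarrow$ elementary (Proposition~\ref{purethenelem}), elementary $\Rightarrow$ regular mono in $\mathbf{Lex}_{\kappa }$ (Theorem~\ref{elemiffregmono}), embedding the equalizer target into a product of models (Theorems~\ref{lextoreg} and~\ref{regtocoh}), projecting to a set of parallel pairs, and restricting to the full subcategory of models --- is exactly the paper's.
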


\begin{proof}
   The Yoneda-embedding followed by sheafification $\# Y:\mathcal{C}\to Sh(\mathcal{C},\langle E\rangle _{\kappa })$ is $\kappa $-lex, $E$-preserving, and it induces an equivalence between the category of $\mathcal{C}\to \mathbf{Set}$ $\kappa $-lex $E$-preserving and the category of $Sh(\mathcal{C})\to \mathbf{Set}$ $\kappa $-lex cocontinuous functors (given by precomposition and left Kan-extension, see \cite[Theorem 3.15]{presheaftype}). $Sh(\mathcal{C})$ is a $\kappa $-topos, i.e.~an $(\infty ,\kappa )$-coherent Grothendieck-topos. So we can close the image $\# Y[\mathcal{C}]$ under $\kappa $-small disjoint coproducts, effective epi - mono factorizations and $\kappa $-small limits to obtain a (small) $\kappa $-coherent full subcategory $\widetilde{\mathcal{C}}$ with $\kappa $-small disjoint coproducts. Then the full embedding $\widetilde{\mathcal{C}}\hookrightarrow Sh(\mathcal{C})$ induces an equivalence between $Sh(\mathcal{C},\langle E\rangle _{\kappa })$ and $Sh(\widetilde{\mathcal{C}},\{<\kappa \text{ eff.~epic families} \})$, by the comparison lemma \cite[Theorem C2.2.3]{elephant}. Consequently $\mathcal{A}\simeq \mathbf{Coh}_{\kappa }(\widetilde{\mathcal{C}},\mathbf{Set})$.
   
   Let $\alpha :M\Rightarrow N$ be $\lambda $-pure in $\mathbf{Coh}_{\kappa }(\widetilde{\mathcal{C}},\mathbf{Set})$. By Proposition~\ref{purethenelem} it is elementary. By Theorem~\ref{elemiffregmono} it arises as the equalizer of some pair $\beta _1,\beta _2: N\Rightarrow F$ in $\mathbf{Lex}_{\kappa }(\widetilde{\mathcal{C}},\mathbf{Set})$. By Theorem~\ref{lextoreg} and by Theorem~\ref{regtocoh} $F$ admits a (regular) monomorphism to a product of $\kappa $-coherent functors. Write 
\[\begin{tikzcd}
	M && N && {\prod _i N_i}
	\arrow["{\alpha }", Rightarrow, from=1-1, to=1-3]
	\arrow["{\langle \gamma _i \rangle _i}", shift left=2, Rightarrow, from=1-3, to=1-5]
	\arrow["{\langle \delta _i \rangle _i}"', shift right=2, Rightarrow, from=1-3, to=1-5]
\end{tikzcd}\]
   for the resulting diagram. It is an equalizer in $\mathbf{Lex}_{\kappa }(\widetilde{\mathcal{C}},\mathbf{Set})$, i.e.~$\alpha $ is the joint equalizer of the pairs $(\gamma _i, \delta _i)$. But then the same is true in the full subcategory $\mathbf{Coh}_{\kappa }(\widetilde{\mathcal{C}},\mathbf{Set})$.
\end{proof}

We repeat the result in the $\kappa =\omega $ case:

\begin{corollary}
    Let $\mathcal{A}$ be a $\lambda $-accessible category which is axiomatizable in coherent logic, i.e.~there is a coherent category $\mathcal{C}$ s.t.~$\mathcal{A}\simeq \mathbf{Coh}(\mathcal{C},\mathbf{Set})$. Then every $\lambda $-pure map is a strict monomorphism.
\end{corollary}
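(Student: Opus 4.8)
The plan is to recognize this statement as nothing more than the instance $\kappa=\omega$ of Theorem~\ref{main}. Since that theorem explicitly admits $\kappa=\omega$ in place of the strong-compactness hypothesis, the entire task reduces to matching the data: I must present the given $\mathcal{A}\simeq\mathbf{Coh}(\mathcal{C},\mathbf{Set})$ as $Mod_\omega(\mathcal{C},E)$ for an $\omega$-site $(\mathcal{C},E)$ whose $E$-families have $<\omega$ legs.

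First I would equip the coherent category $\mathcal{C}$ with its canonical coherent coverage $E$: a finite family $\{U_i\to X\}_{i}$ belongs to $E$ exactly when the join $\bigvee_i \mathrm{im}(U_i\to X)$ of the images equals $X$ in the subobject lattice $\mathrm{Sub}(X)$ (these finite joins exist because $\mathcal{C}$ is coherent). In particular the empty family covers an object iff that object is initial, and each single regular epimorphism is a cover. Every such covering family is finite, so every $E$-family has $<\omega$ legs, as required by the hypotheses of Theorem~\ref{main}.

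Next I would check that $Mod_\omega(\mathcal{C},E)\simeq\mathbf{Coh}(\mathcal{C},\mathbf{Set})$. By definition $Mod_\omega(\mathcal{C},E)$ is the full subcategory of $\mathbf{Lex}(\mathcal{C},\mathbf{Set})$ on the finite-limit-preserving $F$ that send each $E$-cover to a jointly surjective family of functions. Unwinding this against the description of $E$, it says precisely that $F$ carries regular epimorphisms to surjections and finite joins of subobjects (including the empty join, forcing $F$ to send an initial object to $\emptyset$) to the corresponding finite unions in $\mathbf{Set}$ — which is exactly the definition of a coherent functor. Hence the two full subcategories of $\mathbf{Lex}(\mathcal{C},\mathbf{Set})$ coincide, and $\mathcal{A}\simeq\mathbf{Coh}(\mathcal{C},\mathbf{Set})=Mod_\omega(\mathcal{C},E)$.

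With the site in hand the conclusion is immediate: $(\mathcal{C},E)$ is an $\omega$-site with $E$-families of $<\omega$ legs, and $\mathcal{A}=Mod_\omega(\mathcal{C},E)$ is $\lambda$-accessible by hypothesis, so Theorem~\ref{main} applies and every $\lambda$-pure map is a strict monomorphism. The only genuinely delicate point — and essentially the whole content of this reduction — is the translation between the syntactic phrase \emph{axiomatizable in coherent logic} and the site-theoretic object $Mod_\omega(\mathcal{C},E)$, namely verifying that preservation of the coherent coverage is the same as coherence of the functor. Everything past that is a direct appeal to the already-established Theorem~\ref{main}.
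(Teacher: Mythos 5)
Your proposal is correct and matches the paper exactly: the paper presents this corollary as nothing but the $\kappa=\omega$ instance of Theorem~\ref{main}, which is precisely your reduction. Your explicit verification that $\mathbf{Coh}(\mathcal{C},\mathbf{Set})=Mod_{\omega }(\mathcal{C},E)$ for the canonical coherent coverage (finite families whose images join to the top of $\mathrm{Sub}(X)$) is the standard translation the paper leaves implicit, and it is carried out correctly.
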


As another corollary we get:

\begin{theorem}
\label{answer}
    Assume that there is a proper class of strongly compact cardinals. Let $\mathcal{A}$ be a $\lambda $-accessible category. Then there is a regular $\mu \vartriangleright \lambda $ such that $\mu $-pure implies strict mono.
\end{theorem}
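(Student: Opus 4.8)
The plan is to deduce this from Theorem~\ref{main} by re-presenting $\mathcal{A}$ as a $\kappa $-site with small covering families, for a suitably large strongly compact $\kappa $ which will itself be the required $\mu $. First I would fix a $\lambda $-site presentation of $\mathcal{A}$: as recalled in the introduction there is a small $\lambda $-lex category $\mathcal{C}$ and a set $E$ of wide cospans with $\mathcal{A}\simeq Mod_{\lambda }(\mathcal{C},E)$. Since $\mathcal{C}$ is small, the cardinal $\theta :=|\mathrm{mor}\,\mathcal{C}|$ bounds the number of legs of every $E$-family. Using the proper class hypothesis I would then choose a strongly compact $\kappa $ with $\kappa >\max (\lambda ,\theta )$. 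A strongly compact cardinal is inaccessible, hence a regular strong limit, so a short computation gives $\kappa \vartriangleright \lambda $ (for $\alpha <\kappa $ and $\nu <\lambda $ one has $|\alpha |^{\nu }\leq 2^{\max (|\alpha |,\nu )}<\kappa $, whence $|[\alpha ]^{<\lambda }|=\sup _{\nu <\lambda }|\alpha |^{\nu }<\kappa $ by regularity). Therefore $\mathcal{A}$ is $\kappa $-accessible and $\mu :=\kappa $ is a regular cardinal with $\mu \vartriangleright \lambda $, as the statement demands.

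The core step is to convert the $\lambda $-site into a $\kappa $-site presenting the same category without enlarging the covers. For this I would pass to the free $\kappa $-lex completion $\iota :\mathcal{C}\hookrightarrow \mathcal{C}'$ of $\mathcal{C}$ formed relative to its $\lambda $-lex structure; concretely $\mathcal{C}'$ is the closure of the representables under $\kappa $-small limits inside the complete category $\mathbf{Lex}_{\lambda }(\mathcal{C},\mathbf{Set})^{op}$. This $\mathcal{C}'$ is again small and $\kappa $-lex, and its universal property yields an equivalence $\mathbf{Lex}_{\kappa }(\mathcal{C}',\mathbf{Set})\simeq \mathbf{Lex}_{\lambda }(\mathcal{C},\mathbf{Set})$ given by precomposition with $\iota $. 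Since $G(\iota f)=(G\iota )(f)$, a $\kappa $-lex functor $G$ sends the family $\iota E$ to a jointly surjective one precisely when $G\iota $ does so for $E$; hence the equivalence restricts to $Mod_{\kappa }(\mathcal{C}',\iota E)\simeq Mod_{\lambda }(\mathcal{C},E)\simeq \mathcal{A}$. The decisive point is that $\iota E$ has exactly the same legs as $E$, so every $\iota E$-family has $\leq \theta <\kappa $ legs.

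Finally I would invoke Theorem~\ref{main} for the $\kappa $-site $(\mathcal{C}',\iota E)$: the cardinal $\kappa $ is strongly compact, every $\iota E$-family has $<\kappa $ legs, and $\mathcal{A}=Mod_{\kappa }(\mathcal{C}',\iota E)$ is $\kappa $-accessible, i.e.~$\lambda _0$-accessible for the regular cardinal $\lambda _0=\kappa \geq \kappa $. The theorem then gives that $\kappa $-pure maps are strict monomorphisms, which is exactly the assertion for $\mu =\kappa $. I expect the main obstacle to be the middle step, namely checking that the relative free $\kappa $-lex completion leaves both the category of models and the jointly-surjective conditions untouched, so that the explicit leg bound $\theta $ survives the passage to the new site; by contrast the surrounding cardinal bookkeeping (that a strongly compact $\kappa $ can be taken sharply above $\lambda $ and above $\theta $) is routine.
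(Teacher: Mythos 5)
Your proposal is correct and takes essentially the same route as the paper: present $\mathcal{A}$ by a $\lambda$-site $(\mathcal{C},E)$, choose a strongly compact $\mu=\kappa$ above $\lambda$, $|\mathcal{C}|$ and the leg-bound (so that $\lambda \vartriangleleft \mu$ and $\mathcal{A}$ is $\mu$-accessible), re-present $\mathcal{A}$ by a $\mu$-site with the same covering families, and apply Theorem~\ref{main}. The only real difference is bookkeeping: your relative free $\kappa$-lex completion $\mathcal{C}'$ (the closure of the representables under $\kappa$-small limits in $\mathbf{Lex}_{\lambda}(\mathcal{C},\mathbf{Set})^{op}$) coincides with the paper's $\mathbf{Lex}_{\lambda}(\mathcal{C},\mathbf{Set})_{<\mu}^{op}$, since that closure splits idempotents and hence consists precisely of the $\mu$-presentable objects, and the universal property plus transfer of $E$ that you flag as the main obstacle is exactly what the paper imports wholesale from \cite[Theorem 4.7]{presheaftype}.
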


\begin{proof}
    Every $\lambda $-accessible category is axiomatizable by some $\lambda $-site $(\mathcal{C},E)$: indeed, by \cite[Theorem 2.58]{rosicky} it can be axiomatized by some sketch whose limit part only contains $\lambda $-small diagrams and by the argument in \cite[Proposition 2.2.7.(1)]{elephant} it can be replaced by a $\lambda $-site. (A cocone is sent to a colimit cocone in $\mathbf{Set}$ iff $a)$ the cocone maps are jointly epimorphic and $b)$ given two elements in the diagram they are identified by the respective cocone maps iff they can be connected by some zig-zag inside the diagram. These are expressible by saying that some families are sent to epimorphic ones.)
    
    Take some $\mu \geq \lambda $ such that $\mu $ is strongly compact, every $E$-family has $<\mu $ legs and $|\mathcal{C}|<\mu $. Then by \cite[Example 2.13.(4)]{rosicky} $\lambda \vartriangleleft \mu $ and by \cite[Theorem 4.7]{presheaftype} $(\mathbf{Lex}_{\lambda }(\mathcal{C},\mathbf{Set})_{<\mu }^{op}, Y[E])$ is a $\mu $-site axiomatizing $\mathcal{A}$. It satisfies the conditions of Theorem~\ref{main}. So $\mu $-pure implies strict mono.
\end{proof}

\begin{remark}
    We must allow $\mu $ to be larger than $\lambda $. In \cite{ADAMEK19961} they construct a small $\omega $-accessible category with an $\omega $-pure map which is not even a strong mono. 
    
    Note that the above theorem trivially holds for small accessible categories: if $\mathcal{A}$ is small then there is some $\mu $ s.t.~$\mu $-pure = $\infty $-pure = split mono. 
\end{remark}

\begin{question}
    Do we need large cardinals for Theorem~\ref{answer}?
\end{question}

\printbibliography

\end{document}